	\setlist[enumerate]{label={\normalfont(\alph*)}} 
	\titleformat{\section}[block]{\Large\bfseries\filcenter}{\thesection}{1em}{}
\let\oldbibliography\thebibliography
\renewcommand{\thebibliography}[1]{%
  \oldbibliography{#1}%
  \setlength{\itemsep}{-.5pt}%
}
\renewcommand*\thesection{\arabic{section}}
\theoremstyle{plain}
\newtheorem{teo}[equation]{Theorem}
\newtheorem{lema}[equation]{Lemma}
\newtheorem{cor}[equation]{Corollary}
\theoremstyle{definition}
\newtheorem{remark}[equation]{Remark}
\newcommand{\thistheoremname}{}
\newtheorem{genericthm}[equation]{\thistheoremname}
\newcommand{\thistheoremnames}{}
\newtheorem*{genericthms}{\thistheoremnames}
\newenvironment{para*}[1]
  {\renewcommand{\thistheoremnames}{#1}%
   \begin{genericthms}}
  {\end{genericthms}}
\let\expandafter\oldproof\csname\string\proof\endcsname
\let\oldendproof\endproof
\renewenvironment{proof}[1][\proofname]{%
  \oldproof[\upshape \bfseries #1:]%
}{\oldendproof}
\def\@makechapterhead#1{%
  \vspace*{50\p@}%
  {\parindent \z@ \raggedright \normalfont
    \interlinepenalty\@M
    \Huge\bfseries  \thechapter.\quad #1\par\nobreak
    \vskip 40\p@
  }}
\def \R {\mathbb{R}}
\def \D{\textup{D}}
\def \d{\,\textup{d}}
\def \p{\partial}
\def \mc{\mathcal}
\def \tp{\textup}
\DeclareFontFamily{OMX}{MnSymbolE}{}
\DeclareSymbolFont{MnLargeSymbols}{OMX}{MnSymbolE}{m}{n}
\DeclareFontShape{OMX}{MnSymbolE}{m}{n}{
    <-6>  MnSymbolE5
   <6-7>  MnSymbolE6
   <7-8>  MnSymbolE7
   <8-9>  MnSymbolE8
   <9-10> MnSymbolE9
  <10-12> MnSymbolE10
  <12->   MnSymbolE12
}{}
\DeclareFontShape{OMX}{MnSymbolE}{b}{n}{
    <-6>  MnSymbolE-Bold5
   <6-7>  MnSymbolE-Bold6
   <7-8>  MnSymbolE-Bold7
   <8-9>  MnSymbolE-Bold8
   <9-10> MnSymbolE-Bold9
  <10-12> MnSymbolE-Bold10
  <12->   MnSymbolE-Bold12
}{}
\let\llangle\@undefined
\let\rrangle\@undefined
\DeclareMathDelimiter{\llangle}{\mathopen}%
                     {MnLargeSymbols}{'164}{MnLargeSymbols}{'164}
\DeclareMathDelimiter{\rrangle}{\mathclose}%
                     {MnLargeSymbols}{'171}{MnLargeSymbols}{'171}
\begin{document}

 \title{\LARGE \textbf{Remarks on Ornstein's non-inequality in
     $\R^{2\times 2}$}}

\author[1]{{\Large Daniel Faraco}}
\author[2]{{\Large Andr\'e Guerra\vspace{0.4cm}}}

\affil[1]{\small 
Universidad Autónoma de Madrid,  Departamento de Matemáticas, 28049 Madrid, Spain  \protect \\
{\tt{daniel.faraco@uam.es}}\vspace{1em}\ }

\affil[2]{\small University of Oxford, Andrew Wiles Building, Woodstock Rd, Oxford OX2 6GG, United Kingdom \protect \\
{\tt{andre.guerra@maths.ox.ac.uk}}\ }

\date{}

\maketitle

\begin{abstract}
We give a very concise proof of Ornstein's $L^1$ non-inequality for first- and second-order operators in two dimensions. The proof just needs a two-dimensional laminate supported on three points.
\end{abstract}

\vspace{0.2cm}

\unmarkedfntext{
\hspace{-0.85cm} 
\emph{2010 Mathematics Subject Classification:} 26D10 (42B20, 49J45)

\noindent \emph{Keywords:} $L^1$ inequalities, Quasiconvexity, Rank-one convexity, Laminates.

\noindent  \emph{Acknowledgments:} The authors thank Jan Kristensen for interesting comments. D.F.\ acknowledges financial support from the Spanish Ministry of Science and Innovation through the Severo Ochoa Programme for Centres of Excellence in R\&D (CEX2019-000904-S) and the ICMAT Severo Ochoa grant SEV2015-0554 and 
 is partially supported by the Line of excellence for University Teaching Staff between CM and UAM, by the ERC Advanced Grant 834728 and
by the MTM2017-85934-C3-2-P.
A.G. was supported by  [EP/L015811/1].
}

Given two linear constant-coefficient homogeneous $k$-th order differential operators $\mc P_1$, $\mc P_2$ and a number $1\leq p\leq \infty$, consider the inequality
\begin{equation}
\label{eq:genestimate}
\Vert \mc P_1 \varphi \Vert_{L^p(\R^n)} \leq C_p \Vert \mc P_2 \varphi \Vert_{L^p(\R^n)}, \qquad \tp{for all }\varphi \in C^\infty_c(\R^n,\R^m),
\end{equation}
where $0<C_p$ is some constant.
When does such an estimate hold?

The case $1<p<\infty$ is very classical: if we take $\mc P_1=\D^k$ to be the $k$-th order gradient then \eqref{eq:genestimate} holds if and only if $\mc P_2$ is an elliptic operator (in the sense that it has injective symbol); this is a classical result which goes back to the work of Calderón and Zygmund \cite{Calderon1952}. We refer the reader to \cite{GuerraRaita2020} for a generalisation of \eqref{eq:genestimate} which allows for operators with non-trivial kernels.

At the end-points $p=1$ or $p=\infty$, \eqref{eq:genestimate} never holds, except in trivial situations: this was proved, respectively, by Ornstein \cite{Ornstein1962} and Mityagin \cite{Mityagin1958}, but see also \cite{deLeeuw1962} for the $p=\infty$ case. In some circumstances one can deduce the result for $p=\infty$ from the one for $p=1$, see for instance \cite{Bourgain2002, McMullen1998} for the case $\mc P_2=\tp{div}$, and in fact the result for $p=1$ is much more difficult. Similar results also hold in the anisotropic setting, see \cite{Kazaniecki2015, Prosinski2020}.

The failure of \eqref{eq:genestimate} when $p=1$ can also be deduced from the Kirchheim--Kristensen convexity theorem \cite{Kirchheim2011, Kirchheim2016}. Besides providing a concise proof of Ornstein's result, their theorem also has applications to
the regularity of hessians of rank-one convex functions and to the
characterization of gradient Young measures \cite{Kristensen2019, Kristensen2010}. Both the failure of \eqref{eq:genestimate} when $p=1$, $\mc P_1=\D, \mc P_2=\mc E$, as well as the existence of rank-one convex functions on 3-dimensional spaces with irregular hessians, were proved by the first author and collaborators in
\cite{Conti2005, Conti2005a} through constructions with unbounded  laminates (the so-called staircase laminates) introduced in \cite{Faraco2003b,Faraco2004}. See also \cite{Astala2008} and \cite{Faraco2018, Oliva2016} for related problems for $p>1$.  Such laminates can
be used to provide a fairly explicit deformation showing Ornstein's non inequality, but their construction is somewhat complicated as it includes an infinite process.

The purpose of this note is to give an alternative proof  of Ornstein's result when $n=m=2$ and $k=1$. These assumptions encompass the case where $\mc P_1=\D$ and $\mc P_2$ is any of the operators
$$
(\tp{div}, \tp{curl}),
\qquad \mc E u\equiv \frac 1 2(\D u +(\D u)^\tp{T}),
\qquad \bar \p u\equiv \mc E u-\frac{\tp{div}\,u}{2} \tp{Id}_2,
$$
which appear respectively in electromagnetism, linearized elasticity and complex analysis.
Our strategy is similar to the one of Kirchheim--Kristensen \cite{Kirchheim2011, Kirchheim2016} and in fact we prove a particular case of their convexity theorem. However, our approach is less elaborate, as the  singular value decomposition reduces the problem to building a laminate on the diagonal matrices and, since the integrand of interest is 1-homogeneous, the laminate is very simple. 
An interesting question that we do not address here is whether, at least for integrands with symmetries, there exists a Kirchheim--Kristensen theory for $p>1$.



For a bounded open set $\Omega\subset \R^n$ and a function $f\colon \R^{m\times n}\to \R$, we recall that:
\begin{enumerate}[topsep=1pt]
  \itemsep0em 
\item\label{it:qc} $f$ is \emph{quasiconvex} at $A\in \R^{m\times n}$ if
  $0\leq \int_{\Omega} f(A+\D\varphi)-f(A)\d x$ for all $\varphi \in C^\infty_c(\Omega,\R^m)$;
\item\label{it:rc} $f$ is \emph{rank-one
    convex} if $t\mapsto f(A+tX)$ is convex for all
  $A, X\in \R^{m\times n}$ with $\tp{rank}\,X=1$;
\item $f$
  is \emph{positively 1-homogeneous} if $f(t A)=t f(A)$ for all $t>0$
  and all $A\in \R^{m\times n}$;
  \item $f$ is \emph{1-homogeneous} if $f(t A)=|t| f(A)$ for all $t\in \R$ and all $A\in \R^{m\times n}$.
\end{enumerate}
It is well-known that the definition in \ref{it:qc} is independent of $\Omega$ and that \ref{it:qc} $\Rightarrow$ \ref{it:rc}, see \cite{Dacorogna2007}, although in general the converse is not true \cite{Sverak1992a}. 
In \cite{Kirchheim2016}, the following theorem was proved: 

\begin{teo}[Kirchheim--Kristensen]\label{thm:KK}
  Let $f\colon \R^{m\times n} \to \R$ be positively 1-homogeneous and
  rank-one convex. Then $f$ is convex at all matrices $X$ with
  $\tp{rank}\, X\leq 1$.
\end{teo}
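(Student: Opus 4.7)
The goal is to show $f^{**}(X) = f(X)$ for every $X$ with $\tp{rank}\, X \leq 1$, where $f^{**}$ denotes the convex envelope of $f$. Since $f^{**} \leq f$ always, only $f^{**}(X) \geq f(X)$ requires proof. Using positive 1-homogeneity, the rescaling $B_i := \lambda_i A_i$ reformulates convexity at $X$ as the subadditivity statement
\[
    \sum_{i=1}^N f(B_i) \;\geq\; f(X) \qquad \text{whenever } X = \sum_{i=1}^N B_i.
\]

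I would handle the case $X=0$ first, where the statement reduces to $\sum_i f(B_i) \geq 0$ whenever $\sum_i B_i = 0$. For a single pair $\{B,-B\}$ with $B$ of rank one, this is immediate: rank-one convexity of $t\mapsto f(tB)$ combined with $f(tB) = tf(B)$ for $t\geq 0$ (from 1-homogeneity) gives $f(B)+f(-B) \geq 2f(0) = 0$. For $B$ of higher rank, I would decompose $B = R + S$ with $R,S$ rank one (possible by SVD in $\R^{2\times 2}$, with an iterated decomposition in higher dimensions) and construct the four-point laminate
\[
    \tfrac14\bigl(\delta_B + \delta_{-B} + \delta_{R-S} + \delta_{-(R-S)}\bigr),
\]
which has barycenter $0$ via two successive rank-one splittings. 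Rank-one convexity applied at each split yields $f(B)+f(-B)+f(R-S)+f(-(R-S)) \geq 0$. To eliminate the auxiliary terms $f(\pm(R-S))$, I would iterate the construction on $R-S$ and use 1-homogeneity to rescale, so that residuals at successive iterations form a geometric series tending to zero. The general $N$-term case would follow by a similar iterated-pairing argument.

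The rank-one case $X = a\otimes b$ would then be reduced to the case $X=0$: once $f^{**}(0) = 0$ is known, $\partial f(0)$ is nonempty, and the remaining task is to identify $P_0 \in \partial f(0)$ with $\langle P_0, X\rangle = f(X)$. Considering the non-negative, positively 1-homogeneous, rank-one convex function $h(Y) := f(Y) - \langle P_0, Y\rangle$, which vanishes at $0$, and applying an analogous laminate construction centered at $X$, one should force $h(X) = 0$ --- using the linearity of $f$ on the ray $\R^+X$ (from 1-homogeneity) to close the argument.

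\textbf{Main obstacle.} The crux is the recursive elimination of auxiliary contributions in the laminate construction when $X=0$. Without positive 1-homogeneity the iteration would not close; with it, the rescaling converts the residuals into a geometrically decaying series. In the present paper's setting of $\R^{2\times 2}$, the SVD reduces the problem to diagonal matrices, and the specific structure of the 1-homogeneous integrands arising in Ornstein's problem permits a direct $3$-point laminate construction --- bypassing the iterative elimination entirely, as signalled in the abstract.
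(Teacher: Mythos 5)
The paper does not actually prove Theorem~\ref{thm:KK}: it cites \cite{Kirchheim2016} and instead proves only the much weaker Lemma~\ref{lema:laminate} (fully 1-homogeneous $f$, on $\R^{2\times2}$, convexity only at $X=0$), which suffices for the application. Your plan aims at the full positively 1-homogeneous statement in all dimensions and at all rank-one $X$, which is considerably harder, so the comparison is between your sketch and the paper's special case.

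For the $X=0$ step, your four-point laminate $\tfrac14(\delta_B+\delta_{-B}+\delta_{R-S}+\delta_{S-R})$ with barycentre $0$ is a correct second-order laminate (split along $R$, then along $S$), and it does yield
\[
f(B)+f(-B)\;\geq\;-\bigl(f(R-S)+f(S-R)\bigr).
\]
But the claim that iterating on $R-S$ produces geometrically decaying residuals is unjustified and, in $\R^{2\times2}$, false: if $B=R+S$ is the singular value decomposition (so that $R,S$ have orthogonal rows and columns), then $R-S$ has exactly the same singular values as $B$, hence the same Frobenius norm; more generally, any rank-one splitting $B=R+S$ that makes $R-S$ small forces $R$ and $S$ to be nearly parallel, contradicting $\tp{rank}\,B=2$. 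So your recursion does not close. The paper's Lemma~\ref{lema:laminate} sidesteps this entirely with a three-point laminate that is \emph{self-referential}: in the notation of the proof, the atoms are $2X$, $A$ and $-2A$ with barycentre $X=Q(1,-y)R$ and $A=Q(1,y)R$, so one atom is a positive multiple of the barycentre. Positive 1-homogeneity then cancels the $f(X)$ terms on both sides of Jensen's inequality in one stroke, leaving $0\leq f(A)+f(-A)$ with no residual to iterate away. That self-cancellation is the key structural idea your laminate misses, and it is also why the paper only needs three atoms.

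Two further gaps worth flagging. First, even if the pair case $\{B,-B\}$ were settled, convexity at $0$ requires control of all finite convex decompositions $0=\sum\lambda_i A_i$; "iterated pairing" is not a reduction you can take for granted. Second, the reduction of the rank-one case to the case $X=0$ is circular as written: you want to locate some $P_0\in\partial f(0)$ with $\langle P_0,X\rangle=f(X)$, but the existence of such a $P_0$ is precisely the statement that $f$ is convex at $X$. The actual argument in \cite{Kirchheim2016} has to construct the correct supporting functional rather than assume it; the paper under review does not attempt the rank-one case at all, since $X=0$ (plus the symmetric variant of Remark~\ref{remark:sym}) is all that Theorem~\ref{teo:main} needs.
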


The reader may find other results concerning positively 1-homogeneous rank-one convex functions in \cite{Dacorogna2008, Muller1992, Sverak1991}.
In the planar case there is a particularly simple proof of Theorem \ref{thm:KK} for  1-homogeneous functions:

\begin{lema}\label{lema:laminate}
  Let $f\colon \R^{2\times 2}\to \R$ be 1-homogeneous and
  rank-one convex. 
  Then $f\geq 0$ and moreover, as $f(0)=0$, $f$ is convex at zero.
\end{lema}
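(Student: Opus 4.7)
The strategy is to reduce, via the singular value decomposition, to the case where $A$ is diagonal with non-negative entries, and then to exploit the full $\R$-homogeneity (as opposed to mere positive homogeneity) to derive a Lipschitz-type estimate along rank-one lines that forces non-negativity.

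First I would reduce by SVD: writing $A = U\Sigma V^\tp{T}$ with $U,V$ orthogonal and $\Sigma = \tp{diag}(a,b)$, $a\geq b\geq 0$, the conjugated function $g(X):=f(UXV^\tp{T})$ inherits both rank-one convexity and 1-homogeneity (orthogonal matrices preserve ranks), and satisfies $g(\Sigma)=f(A)$. Hence it suffices to prove $f(\tp{diag}(a,b))\geq 0$ for $a\geq b\geq 0$. The case $b=0$ is immediate: $A=aE_{11}$ is rank-one, and convexity of $t\mapsto f(tE_{11})=|t|f(E_{11})$ forces $f(E_{11})\geq 0$; likewise $f(E_{22})\geq 0$.

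Assume therefore $a\geq b>0$. The main technical input is the estimate
\begin{equation*}
|f(M+X)-f(M)| \leq f(X) \qquad \text{for all } M\in\R^{2\times 2} \text{ and all rank-one } X.
\end{equation*}
To prove it I would consider $\varphi(t):=f(M+tX)$, which is convex on $\R$ by rank-one convexity. Using 1-homogeneity, $\varphi(t)=|t|\,f(M/t+X)$ for $t\neq 0$, and continuity of $f$ (automatic from the local Lipschitz property of rank-one convex functions) yields $\varphi(t)/t\to f(X)$ as $t\to +\infty$ and $\varphi(t)/t\to -f(X)$ as $t\to -\infty$. A convex function on $\R$ with such symmetric asymptotic slopes has its subgradient trapped in $[-f(X),f(X)]$, and is therefore $f(X)$-Lipschitz; evaluating between $t=0$ and $t=1$ gives the claimed bound.

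From here the conclusion follows by applying the estimate along the two canonical rank-one directions at $A$: taking $X=-aE_{11}$ (so $A+X=bE_{22}$) yields $f(A)\geq bf(E_{22})-af(E_{11})$, and taking $X=-bE_{22}$ yields $f(A)\geq af(E_{11})-bf(E_{22})$, whence $f(A)\geq |af(E_{11})-bf(E_{22})|\geq 0$. The main obstacle is the Lipschitz bound itself, where the two-sided $\R$-homogeneity is crucial: it pins the asymptotic slopes of $\varphi$ at $\pm\infty$ to the symmetric pair $\pm f(X)$, so that the convex graph of $\varphi$ cannot escape an $f(X)$-Lipschitz tube. The ``laminate supported on three points'' from the abstract materialises here as the configuration $\{M-TX,\,M,\,M+X\}$ on a rank-one line, of barycenter $M$, in the limit $T\to\infty$, whose Jensen inequality recovers precisely the above estimate.
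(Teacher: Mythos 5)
Your proof is correct and takes a genuinely different route from the paper's. Both begin with the same SVD reduction, but the paper then exhibits a single explicit second-order laminate on three non-collinear atoms with barycentre $Q\,\mathrm{diag}(1,-y)\,R$, whose weights are chosen so that one application of Jensen's inequality together with $1$-homogeneity makes the barycentre term cancel, leaving $0\leq f(A)+f(-A)=2f(A)$. You instead factor out a rank-one Lipschitz estimate: for $1$-homogeneous rank-one convex $f$ and rank-one $X$, the convex function $t\mapsto f(M+tX)$ has asymptotic slopes $\pm f(X)$ at $\pm\infty$ (this is where the two-sided $\R$-homogeneity is used, to give $f(-X)=f(X)$), so its non-decreasing subgradient is trapped in $[-f(X),f(X)]$, yielding $|f(M+X)-f(M)|\leq f(X)$ and, as a byproduct, $f(X)\geq 0$ for all rank-one $X$ (the interval must be non-empty). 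Applying the estimate twice at $A=\mathrm{diag}(a,b)$ along $-aE_{11}$ and $-bE_{22}$ gives $f(A)\geq|af(E_{11})-bf(E_{22})|\geq 0$. The paper's version is the more compact — one explicit laminate, no intermediate lemma — whereas yours isolates a reusable Lipschitz-type inequality along rank-one lines and makes visible exactly where the full $\R$-homogeneity enters. One small caveat about your closing remark: the configuration $\{M-TX,\,M,\,M+X\}$ lies on a single rank-one line, so it is only a trivial first-order laminate (ordinary convexity on a line), whereas the paper's three-atom measure has genuinely non-collinear support arising from two successive rank-one splittings; your limiting argument is not simply a finite-$T$ truncation of the paper's construction.
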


\begin{proof}
For $A\in \R^{2\times 2}$ the singular value decomposition yields $Q,R\in \tp{O}(2)$ and
  $\Lambda\in \R^{2\times 2}_\tp{diag}$ such that $A=Q\Lambda R;$
  moreover, the entries of $\Lambda$ are non-negative. Let us write
  $(x,y)\equiv \tp{diag}(x,y)$. If $0\neq A$ then, by
  homogeneity, we can
  assume that $\Lambda=(1,y)$, where $y\geq 0$.  The measure
$$\nu = \frac 1 2 \delta_{Q(2,-2y)R}+\frac 1 3
\delta_{Q(1,y)R} + \frac 1 6 \delta_{Q(-2,-2 y)R}$$ is a laminate with
barycentre $Q(1,-y)R$. Indeed, we have the splittings
$$
(1,-y)\to \frac 1 3 (1,y) + \frac 2 3 (1,-2y) \to \frac 1 3(1,y) + \frac 1 6 (-2,-2 y) + \frac 1 2 (2,-2 y)
$$
and the map $A\mapsto QAR$ is rank-preserving.
Since $f$ is rank-one convex and 1-homogeneous,
\begin{align*}
  f\left(Q(1,-y)R\right) &\leq \frac 1 2 f\left(Q(2,-2y)R\right) + \frac 1 3 f\left(Q(1,y)R\right) + \frac 1 6
                f\left(Q(-2,-2y)R\right)\\
              & = f\left(Q(1,-y)R\right) + \frac 1 3 f\left(A\right) + \frac 1 3 f\left(-A\right)
\end{align*}
Hence $0\leq f(A)+f(-A)=2f(A)$ and
the proof is finished.
\end{proof}

\begin{remark}\label{remark:sym}
An identical proof gives the same conclusion if $f\colon \R^{2\times
    2}_\tp{sym}\to \R$; in this case one takes $R=Q^\tp{T}$, since symmetric
  matrices are diagonalisable by orthogonal matrices.
\end{remark}

From Lemma \ref{lema:laminate} we get a two-dimensional
version of Ornstein's non-inequality:

\begin{teo}\label{teo:main}
  Let $\Omega\subset \R^2$ be a bounded open set and let $\mc P_i$ be first-order differential operators, $i=1,2$, acting on $\varphi\in C^\infty_c(\Omega,\R^2)$ by
  $\mc P_i \varphi=P_i (\D \varphi)$, where $P_i\in \tp{Lin}(\R^{2\times 2},\R^{d_i})$.
  
Suppose that there is a constant $C$ such that
\begin{equation}
\Vert \mc P_1 \varphi \Vert_{L^1}\leq C
\Vert\mc P_2 \varphi \Vert_{L^1}
\qquad \tp{ for all }\varphi \in
C^\infty_c(\Omega,\R^2).\label{eq:estimate}
\end{equation}
Then there is $T\in \tp{Lin}(\R^{d_2},\R^{d_1})$ such that $P_1=T \circ P_2$.
Moreover, the same conclusion is true if we require that \eqref{eq:estimate} holds only for those $\varphi$ of the form $\varphi = \nabla \phi$ for some $\phi \in C^\infty_c(\Omega,\R)$.
\end{teo}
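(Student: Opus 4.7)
The plan is to derive from \eqref{eq:estimate} the pointwise matrix inequality $|P_1 A| \leq C|P_2 A|$ for every $A\in \R^{2\times 2}$; from this, $\ker P_2 \subseteq \ker P_1$ follows immediately, which lets one define $T$ on $\textup{Im}(P_2)\subseteq \R^{d_2}$ by $T(P_2 X) := P_1 X$ (well-defined by the kernel inclusion) and then extend $T$ linearly, in any way, to all of $\R^{d_2}$.

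Consider the $1$-homogeneous function $g\colon \R^{2\times 2}\to \R$ defined by $g(A) := C|P_2 A| - |P_1 A|$. The hypothesis \eqref{eq:estimate} says precisely that $g$ is quasiconvex at $0$, namely $\int_\Omega g(\D\varphi)\,\d x \geq 0$ for every $\varphi\in C^\infty_c(\Omega,\R^2)$. Since quasiconvexity at a single point is strictly weaker than rank-one convexity, I cannot apply Lemma \ref{lema:laminate} directly to $g$.

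To sidestep this, I would pass to the quasiconvex envelope $g^{\textup{qc}}$ of $g$. By construction $g^{\textup{qc}}\leq g$ and $g^{\textup{qc}}$ is quasiconvex, hence rank-one convex; moreover, the $1$-homogeneity of $g$ together with a standard rescaling of test functions transfers to $g^{\textup{qc}}$, so it is also $1$-homogeneous. Lemma \ref{lema:laminate} applied to $g^{\textup{qc}}$ then gives $g^{\textup{qc}}\geq 0$, and hence $g\geq g^{\textup{qc}}\geq 0$, which is the desired pointwise inequality.

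For the second statement, test functions of the form $\varphi=\nabla\phi$ have $\D\varphi=\D^2\phi\in \R^{2\times 2}_{\textup{sym}}$, so the same scheme runs on symmetric matrices, with Remark \ref{remark:sym} replacing Lemma \ref{lema:laminate}. The step I expect to require the most care is verifying that $g^{\textup{qc}}$ is a finite function with the stated structural properties: since $g$ is unbounded below, an unrestricted envelope could a priori equal $-\infty$, and it is precisely the $1$-homogeneity of $g$ (together with quasiconvexity at $0$) that prevents rescalings of test functions from driving the envelope to $-\infty$, thereby making the application of Lemma \ref{lema:laminate} to $g^{\textup{qc}}$ legitimate.
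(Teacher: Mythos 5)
Your proposal is correct and follows essentially the same route as the paper: pass to the quasiconvex envelope of $g(A) = C|P_2 A| - |P_1 A|$, show it is $1$-homogeneous and real-valued (hence rank-one convex), apply Lemma \ref{lema:laminate} to obtain $g \geq g^{\textup{qc}} \geq 0$, conclude $\ker P_2 \subseteq \ker P_1$, and build $T$ accordingly (the paper writes $T = P_1 P_2^\dagger$ via the Moore--Penrose inverse, which is the same as your definition on $\textup{im}\,P_2$ followed by extension). One small clarification on the step you flagged: the finiteness of $g^{\textup{qc}}$ is not really forced by $1$-homogeneity, but by the standard fact that a quasiconvex envelope is either real-valued everywhere or identically $-\infty$, together with the observation that \eqref{eq:estimate} is precisely the statement $g^{\textup{qc}}(0)\geq 0$, which rules out the latter alternative.
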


\begin{proof}
Consider the function $f\colon \R^{2\times 2}\to \R$ defined by
$f(A)=C\norm{P_2A}-  \norm{P_1 A}$. Its quasiconvex envelope
$f^\tp{qc}\colon \R^{2\times 2}\to [-\infty,\infty)$ is given
by the Dacorogna formula
$$f^\tp{qc}(A)=\inf_{\varphi \in C^\infty_c(\R^2,\R^2)} \int_{\R^2}
f(A+\D \varphi) \d x;$$
it is easily checked that $f^\tp{qc}$
is 1-homogeneous, since the same
holds for $f$. Note that (\ref{eq:estimate}) is equivalent to
$f^\tp{qc}(0)\geq 0$; thus $f^\tp{qc}>-\infty$ everywhere and hence
$f^\tp{qc}$ is rank-one convex. Applying Lemma \ref{lema:laminate} we
see that $0\leq f^\tp{qc}\leq f$ and so we must have $\ker
P_2\subseteq \ker P_1$. Take $T=P_1 P_2^\dagger$, where
$P_2^\dagger$ is the \emph{Moore--Penrose inverse}, defined by
$$P_2^\dagger\equiv \left( P_2|_{(\ker P_2)^\bot} \right)^{-1}
\tp{Proj}_{\tp{im\,}P_2}.$$
Since $P_2^\dagger P_2$ is the orthogonal projection onto $(\ker
P_2)^\bot$, the conclusion follows.

The last part is identical, except that we replace Lemma \ref{lema:laminate} with Remark \ref{remark:sym}: if \eqref{eq:estimate} holds for all potential vector fields then $(f|_{\R^{2\times 2}_\tp{sym}})^\tp{qc}(0)\geq 0$, see  \cite{Ball1981,Sverak1992} for quasiconvexity on $\R^{n\times n}_\tp{sym}$.
\end{proof}

In particular, from the second part of Theorem \ref{teo:main} we recover \cite[Part 1]{Ornstein1962}:

\begin{cor}
Given a bounded open set $\Omega\subset \R^2$, there is no constant $C$ such that
$$\int_{\Omega} \left|\p_{x_1 x_2}\phi (x) \right|\d x \leq C
\int_{\Omega} \left|\p_{x_1x_1} \phi(x) \right|+\left|\p_{x_2x_2}\phi(x)\right|\d x 
\qquad
\tp{ for all } \phi \in C^\infty_c(\Omega).
$$
\end{cor}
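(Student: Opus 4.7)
The plan is to deduce this corollary as a direct application of the second (symmetric) part of Theorem \ref{teo:main}. I argue by contradiction, assuming that such a constant $C$ exists, and then check that the conclusion of the theorem is inconsistent with the choice of operators at hand.

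First, I would re-cast the stated second-order inequality as a first-order inequality on potential vector fields. Setting $\varphi = \nabla \phi$ so that $\D\varphi = \D^2\phi$ is symmetric, I define $P_1\colon \R^{2\times 2}\to \R$ by $P_1(A) = A_{12}$ and $P_2\colon \R^{2\times 2}\to \R^2$ by $P_2(A) = (A_{11}, A_{22})$. Equipping $\R^2$ with the $\ell^1$-norm gives $|P_2(\D^2\phi)| = |\p_{x_1 x_1}\phi| + |\p_{x_2 x_2}\phi|$ and $|P_1(\D^2\phi)| = |\p_{x_1 x_2}\phi|$, so the hypothetical inequality is precisely \eqref{eq:estimate} restricted to $\varphi = \nabla \phi$. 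The last clause of Theorem \ref{teo:main} then supplies a linear map $T\colon \R^2 \to \R$ with $P_1 = T \circ P_2$, at least when evaluated on $\R^{2\times 2}_\tp{sym}$ (which is all that the symmetric argument in the proof of Theorem \ref{teo:main} controls, and all we will need here).

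To finish, I would test this identity on the symmetric matrix $A = \begin{pmatrix} 0 & 1 \\ 1 & 0 \end{pmatrix}$, for which $A_{12} = 1$ but $(A_{11}, A_{22}) = (0,0)$. Linearity of $T$ then forces $1 = T(0,0) = 0$, a contradiction. I do not foresee any real obstacle, since the analytic content has been entirely absorbed into Lemma \ref{lema:laminate} and Theorem \ref{teo:main}; the only mild point is confirming that the particular choice of $\ell^1$-norm on the target of $P_2$ is compatible with the theorem's hypothesis, but this is automatic because all norms on $\R^2$ are equivalent and the inequality is unaffected up to a change of constant.
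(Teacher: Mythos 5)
Your proposal is correct and follows exactly the route the paper intends: the paper itself offers no written proof of the corollary, merely remarking that it follows from the second part of Theorem \ref{teo:main}, and you fill in precisely the expected details --- setting $\varphi = \nabla\phi$, taking $P_1(A)=A_{12}$, $P_2(A)=(A_{11},A_{22})$, and exhibiting the symmetric matrix $\left(\begin{smallmatrix}0&1\\1&0\end{smallmatrix}\right)\in\ker P_2\setminus\ker P_1$ to contradict the factorization $P_1=T\circ P_2$. You are also right to flag that what the proof of Theorem \ref{teo:main} actually delivers in the potential-field case is the factorization on $\R^{2\times 2}_{\tp{sym}}$, which suffices since your test matrix is symmetric.
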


{\footnotesize
\bibliographystyle{acm}

\bibliography{/Users/antonialopes/Dropbox/Oxford/Bibtex/library.bib}
}

\end{document}